\documentclass[12pt]{article}
\usepackage{amsmath,amssymb,amsthm}
\usepackage[margin=1in]{geometry} 
\usepackage{booktabs}
\usepackage{array}
\usepackage[hidelinks]{hyperref}
\theoremstyle{plain}
\newtheorem{theorem}{Theorem}[section]
\newtheorem{lemma}[theorem]{Lemma}
\newtheorem{proposition}[theorem]{Proposition}

\theoremstyle{definition}
\newtheorem{definition}{Definition}[section]

\title{A Computer-Assisted Proof that \(R(5,10;5,8)=21\) for the Generalized Threshold Ramsey Number}
\author{Wancai Guo, Min Wan\\[0.5ex]
	\small Department of Mathematics, College of Sciences,
	Shihezi University, Shihezi, Xinjiang 832003, China\\
	\small (\texttt{guowancai@xjshzu.com}, \texttt{wanmin@shzu.edu.cn})}

\date{}

\begin{document}
	\maketitle
	
	\begin{abstract}
		We prove that the generalized threshold Ramsey number \(R(5,10;5,8)\) equals \(21\). The proof is computer-assisted. We first enumerate all \((4,6;5,8)\)-good and \((5,10;4,4)\)-good graphs. Then we apply the lucky-vertex inequality derived from the McKay--Radziszowski edge identity to filter candidate pairs for a gluing search, and encode the remaining cases as SAT instances. The search yields exactly \(17\) non-isomorphic \((5,10;5,8;20)\)-good graphs. Finally, we exclude all one-vertex extensions of these \(17\) graphs, which proves that no \((5,10;5,8;21)\)-good graph exists.
	\end{abstract}
	
	\section{Problem Definition and Main Result}
	
	\subsection{Notation and Conventions}
	\label{sec:notation}
	
	All graphs in this paper are simple and undirected. For a graph \(G\), let \(\nu(G)\) denote the number of vertices and \(\varepsilon(G)\) the number of edges. The complement of \(G\) is denoted by \(\overline{G}\). For a vertex \(v\in V(G)\), let \(d_G(v)\) be its degree, \(N_G(v)\) its neighborhood, and \(N_{\overline{G}}(v)\) its antineighborhood. For a subset \(S\subseteq V(G)\), let \(G[S]\) be the subgraph induced by \(S\).
	
	\subsection{Definitions}
	
	\begin{definition}\label{def:R}
		Let \(s,m,t,n\) be positive integers. The generalized threshold Ramsey number \(R(s,m;t,n)\) is the smallest positive integer with the property that every graph of that order satisfies at least one of the following:
		
		(1) there exists \(S\subseteq V(G)\) with \(|S|=s\) and \(\varepsilon(G[S])\ge m\);
		
		(2) there exists \(T\subseteq V(G)\) with \(|T|=t\) and \(\varepsilon(\overline{G}[T])\ge n\).
	\end{definition}
	
	\begin{definition}[\((s,m;t,n)\)-good graph]\label{def:smtn}
		Let \(s,m,t,n\) be positive integers. A graph \(G\) is \((s,m;t,n)\)-good if
		
		(1) every \(s\)-vertex subset induces fewer than \(m\) edges in \(G\), i.e.,
		\[
		\varepsilon(G[S])\le m-1
		\]
		for every \(S\subseteq V(G)\) with \(|S|=s\);
		
		(2) every \(t\)-vertex subset induces fewer than \(n\) edges in the complement \(\overline{G}\), i.e.,
		\[
		\varepsilon(\overline{G}[T])\le n-1.
		\]
		If \(G\) has order \(\nu\), we call it an \((s,m;t,n;\nu)\)-good graph. Let
		\[
		\mathcal{R}(s,m;t,n;\nu)
		\]
		denote the set of all such graphs. For a nonnegative integer \(\varepsilon\), let
		\[
		\mathcal{R}(s,m;t,n;\nu,\varepsilon)
		\]
		denote the subset of \(\mathcal{R}(s,m;t,n;\nu)\) consisting of those graphs with exactly \(\varepsilon\) edges.
	\end{definition}

	\begin{theorem}\label{thm:main}
		\[
		R(5,10;5,8)=21.
		\]
	\end{theorem}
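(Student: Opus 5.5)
The statement splits into a lower bound, \(R(5,10;5,8)>20\), and an upper bound, \(R(5,10;5,8)\le 21\). For the lower bound it suffices to exhibit one \((5,10;5,8;20)\)-good graph. For the upper bound we must show that \(\mathcal{R}(5,10;5,8;21)=\emptyset\). We follow the classical vertex-neighborhood decomposition used for ordinary Ramsey numbers.

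The decomposition works as follows. Let \(G\in\mathcal{R}(5,10;5,8;\nu)\) and \(v\in V(G)\). If a \(4\)-set in \(N_G(v)\) induced \(6\) edges, adding \(v\) would give a \(5\)-set with \(10\) edges. Hence \(G[N_G(v)]\) is \((5,10;4,6)\)-good on the clique side, and it inherits the \((5,8)\) condition on the complement side. In particular \(G[N_G(v)]\) is \((4,6;5,8)\)-good.

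Dually, if a \(4\)-set in \(N_{\overline G}(v)\) induced \(4\) edges of \(\overline G\), adding \(v\) would give a \(5\)-set with \(8\) complement edges. Hence \(G[N_{\overline G}(v)]\) is \((5,10;4,4)\)-good.

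So the first step is to enumerate all \((4,6;5,8)\)-good and \((5,10;4,4)\)-good graphs, up to isomorphism and by number of edges. We do this by one-vertex extension with canonical augmentation, pruning at each order with the inherited conditions. This also yields the maximum orders of these families, and hence degree bounds for vertices of \(G\).

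Next we restrict which pairs \((H_1,H_2)=(G[N(v)],G[N_{\overline G}(v)])\) can occur for \(\nu=20\). Summing over vertices gives the McKay--Radziszowski identity. It relates \(\varepsilon(G)\) to the numbers of edges in the neighborhood and antineighborhood graphs, through the counts of triangles and of triangles in the complement.

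Averaging this identity shows that some "lucky" vertex \(v\) satisfies an inequality of the form
\[
\varepsilon(G[N(v)])+\varepsilon(G[N_{\overline G}(v)])\ \ge\ (\text{an explicit function of } \nu,\ d(v),\ \varepsilon(G)).
\]
Combined with the edge-count ranges of the enumerated families, this leaves a manageable list of triples \((d,H_1,H_2)\) to glue.

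For each such triple we encode the unknown edges between \(\{v\}\cup V(H_1)\) and \(V(H_2)\) as Boolean variables. We add clauses forbidding every \(5\)-set with \(\ge10\) edges and every \(5\)-set with \(\ge8\) complement edges. These are cardinality constraints, encoded with sequential counters and restricted to \(5\)-sets that actually touch the cross edges. We then call a SAT solver, enumerating all solutions with blocking clauses, and reduce the solutions up to isomorphism. This should produce the full set \(\mathcal{R}(5,10;5,8;20)\), and any one member proves \(R(5,10;5,8)\ge 21\).

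For the upper bound, every \(G\in\mathcal{R}(5,10;5,8;21)\) contains \(G-w\in\mathcal{R}(5,10;5,8;20)\) for each vertex \(w\). So it suffices to try all \(2^{20}\) neighborhoods of a new vertex over each graph found, or equivalently to solve a small SAT instance for each, and check that none is good. Since \(2^{20}\) per graph is tiny, this step can be done by brute force with incremental subset checks.

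The main obstacle is the gluing step. The number of candidate pairs and the SAT instances, which contain roughly \(\binom{20}{5}\) constraints with counters, could be large. The lucky-vertex inequality is the key lever: we would first choose \(v\) to be a vertex extremal for the inequality, for example one of minimum or maximum degree with extremal neighborhood edge count, so as to shrink the pair list before invoking SAT.

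Correctness of the enumeration must also be cross-checked. We would do this by regenerating the small families by two independent methods and by verifying the degree and edge-count counts against the identity.
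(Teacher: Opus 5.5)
Your architecture is the paper's: decomposition at a vertex, censuses of \((4,6;5,8)\)- and \((5,10;4,4)\)-good graphs by one-vertex extension, SAT gluing with exhaustive solution enumeration, then one-vertex extension of the order-\(20\) list. However, the step you call the key lever is misstated, and as written it would not work.

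Summing over vertices does not give a lower bound on \(\varepsilon(G_v^+)+\varepsilon(G_v^-)\) in terms of \(\nu\), \(d(v)\), \(\varepsilon(G)\). Write \(d_v=d_G(v)\) and let \(T,\overline{T}\) be the numbers of triangles in \(G,\overline{G}\). Then \(\sum_v\varepsilon(G_v^+)=3T\) and \(\sum_v\varepsilon(G_v^-)=\sum_v\binom{\nu-1-d_v}{2}-3\overline{T}\). So the sum involves \(T-\overline{T}\), which the degrees do not determine. It is the \emph{difference} in which the triangle counts combine into \(T+\overline{T}\), which Goodman's formula fixes. This yields
\[
\sum_{v}2\varepsilon(G_v^-)=\sum_{v}\bigl(d_v(\nu-2d_v)+2\varepsilon(G_v^+)\bigr),
\]
hence some vertex satisfies \(\varepsilon(B)\le\varepsilon(A)+\tfrac12 d(\nu-2d)\), where \(A=G_v^+\) and \(B=G_v^-\). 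For \(\nu=20\) and \(d=10,11,12\) the constant is \(0,-11,-24\).

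The point is that \(\varepsilon(G)\) cancels, so the condition depends only on \(d,\varepsilon(A),\varepsilon(B)\) and can discard pairs before any SAT call. A bound with \(\varepsilon(G)\) on the right cannot be evaluated on a pair, because \(\varepsilon(G)\) depends on the undetermined cross edges. A wrongly derived filter would also silently make the order-\(20\) census incomplete, and the upper bound rests on that census.

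Relatedly, you cannot take \(v\) to be both lucky and of minimum or maximum degree. The identity guarantees only some lucky vertex, of unknown degree, so either assumption alone is sound but not both together. The paper obtains degree information legitimately. It first glues all pairs for \(d\in\{7,8,9,13\}\) without the filter, finds every instance UNSAT, and concludes that every vertex of a \((5,10;5,8;20)\)-good graph has degree in \(\{10,11,12\}\). Only then does it glue a lucky vertex of degree \(10\), \(11\) or \(12\) with the filter, adding \(d(x)\in\{10,11,12\}\) for all other \(x\) as SAT constraints. With these two corrections your plan coincides with the paper's proof.
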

	
	Theorem \ref{thm:main} follows from the following two statements:
	
	(1) there exists a \((5,10;5,8)\)-good graph of order \(20\), hence \(R(5,10;5,8)\ge 21\);
	
	(2) there is no \((5,10;5,8)\)-good graph of order \(21\), hence \(R(5,10;5,8)\le 21\).
	
	\medskip
	\noindent
	The number \(R(5,10;5,8)\) is closely related to the classical Ramsey numbers
	\(R(K_5, K_5-P_3)=25\) and \(R(K_5, W_5)=27\); see \cite{radziszowski}.
	Indeed, \(K_5-P_3\) and \(W_5\) are exactly the two \(5\)-vertex graphs with \(8\) edges,
	and a \((5,10;5,8)\)-good graph is precisely a \(K_5\)-free graph whose complement
	contains neither \(K_5-P_3\) nor \(W_5\). Therefore
	\[
	R(5,10;5,8) = R(K_5, \{K_5-P_3, W_5\}).
	\]
	This is the set Ramsey number \(r_5(10,8)\) for five-vertex graphs with fixed numbers of edges; see Harborth \cite{Harborth1993}.
	To our knowledge, this set Ramsey number has not been determined before.
	Our result \(R(5,10;5,8)=21\) thus gives its exact value, which is strictly smaller
	than both \(R(K_5, K_5-P_3)=25\) and \(R(K_5, W_5)=27\).
	
	\section{Main Ideas}
	The decomposition into the neighborhood and antineighborhood of a vertex, together with the subsequent gluing and one-vertex extension, are inspired by the methods of McKay and Radziszowski \cite{MR1995}.
	
	\subsection{Induced subgraphs at a vertex}
	
	If \(G\in \mathcal{R}(5,10;5,8;\nu)\) and \(v\in V(G)\) has degree \(d\), let
	\[
	G_v^+ = G[N_G(v)]
	\]
	be the subgraph induced by the neighborhood of \(v\), and let
	\[
	G_v^- = G[N_{\overline{G}}(v)]
	\]
	be the subgraph induced by the antineighborhood of \(v\).
	
	\begin{lemma}\label{lem:induced}
		Let \(s,m,t,n,\nu\) be positive integers with \(m\ge s\) and \(n\ge t\).
		Let \(G \in \mathcal{R}(s,m;t,n;\nu)\), and let \(v \in V(G)\) have degree \(d\). Then
		\[
		G_v^+ \in \mathcal{R}(s-1,\,m-s+1;\,t,\,n;\,d)
		\]
		and
		\[
		G_v^- \in \mathcal{R}(s,\,m;\,t-1,\,n-t+1;\,\nu-1-d).
		\]
	\end{lemma}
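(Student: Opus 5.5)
The plan is to verify the two memberships directly from Definition~\ref{def:smtn}, by adjoining \(v\) to a subset of the neighborhood (respectively antineighborhood). In each case one of the two conditions transfers with no work, and the other picks up exactly the edges or non-edges incident to \(v\).

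\emph{Neighborhood.} Let \(S\subseteq N_G(v)\) with \(|S|=s-1\), and put \(S'=S\cup\{v\}\), a set of \(s\) vertices of \(G\). Since \(v\) is adjacent to every vertex of \(S\),
\[
\varepsilon(G[S'])=\varepsilon(G[S])+(s-1).
\]
Goodness of \(G\) gives \(\varepsilon(G[S'])\le m-1\), so \(\varepsilon(G_v^+[S])\le m-s = (m-s+1)-1\). This is condition (1) for parameters \((s-1,m-s+1)\). For condition (2), any \(t\)-subset \(T\subseteq N_G(v)\) is also a \(t\)-subset of \(V(G)\). Because \(G_v^+\) is an induced subgraph, \(\overline{G_v^+}[T]=\overline{G}[T]\), and hence \(\varepsilon(\overline{G_v^+}[T])\le n-1\). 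The order of \(G_v^+\) is \(|N_G(v)|=d\).

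\emph{Antineighborhood.} This case is symmetric with the roles of \(G\) and \(\overline{G}\) swapped. For \(T\subseteq N_{\overline{G}}(v)\) with \(|T|=t-1\), the set \(T\cup\{v\}\) has \(t-1\) more edges in \(\overline{G}\) than \(T\) does, which gives \(\varepsilon(\overline{G_v^-}[T])\le n-t\). Any \(s\)-subset of \(N_{\overline{G}}(v)\) inherits the bound \(m-1\) directly. The order is \(\nu-1-d\).

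\emph{Degenerate subsets and hypotheses.} The hypotheses \(m\ge s\) and \(n\ge t\) ensure that the new thresholds \(m-s+1\) and \(n-t+1\) are positive integers, as Definition~\ref{def:smtn} requires. If \(d<\max(s-1,t)\) or \(\nu-1-d<\max(s,t-1)\), some of the conditions quantify over an empty family of subsets and hold vacuously, so no separate case is needed. I expect no real obstacle beyond keeping this bookkeeping straight.
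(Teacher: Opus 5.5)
Your proposal is correct and follows essentially the same argument as the paper. Adjoining \(v\) to an \((s-1)\)-subset of the neighborhood (resp.\ a \((t-1)\)-subset of the antineighborhood) picks up exactly \(s-1\) edges (resp.\ \(t-1\) non-edges), and the other condition is inherited because \(G_v^{\pm}\) are induced subgraphs.
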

	
	\begin{proof}
		Since \(G\) is \((s,m;t,n)\)-good, every \(s\)-vertex subset of \(G\) induces at most \(m-1\) edges, and every \(t\)-vertex subset of \(\overline{G}\) induces at most \(n-1\) edges.
		
		First consider \(G_v^+\). Let \(S\subseteq V(G_v^+)\) with \(|S|=s-1\). Then \(S\cup\{v\}\) is an \(s\)-vertex subset of \(G\), so
		\[
		\varepsilon(G[S\cup\{v\}])\le m-1.
		\]
		Since \(v\) is adjacent in \(G\) to every vertex of \(S\), we have
		\[
		\varepsilon(G[S])+(s-1)\le m-1,
		\]
		hence \(\varepsilon(G[S])\le m-s\). Also, for every \(T\subseteq V(G_v^+)\) with \(|T|=t\),
		\[
		\varepsilon(\overline{G_v^+}[T])=\varepsilon(\overline{G}[T])\le n-1.
		\]
		Thus \(G_v^+\in\mathcal{R}(s-1,m-s+1;t,n;d)\).
		
		The argument for \(G_v^-\) is analogous, using the complement. Let \(T\subseteq V(G_v^-)\) with \(|T|=t-1\). Since \(v\) is non-adjacent in \(G\) to every vertex of \(T\), the set \(T\cup\{v\}\) is a \(t\)-vertex subset of \(G\) with
		\[
		\varepsilon(G[T\cup\{v\}])=\varepsilon(G[T]).
		\]
		Because \(G\) is \((s,m;t,n)\)-good, we have
		\[
		\varepsilon(\overline{G}[T\cup\{v\}])\le n-1.
		\]
		In \(\overline{G}\), the vertex \(v\) is adjacent to every vertex of \(T\), so
		\[
		\varepsilon(\overline{G}[T])+(t-1)\le n-1,
		\]
		hence \(\varepsilon(\overline{G}[T])\le n-t\). Also, every \(s\)-vertex subset of \(G_v^-\) is an \(s\)-vertex subset of \(G\), so it induces at most \(m-1\) edges. Therefore \(G_v^-\in\mathcal{R}(s,m;t-1,n-t+1;\nu-1-d)\).
	\end{proof}
	
	For our main problem we use the special case \(s=5\), \(m=10\), \(t=5\), \(n=8\), which gives
	\[
	G_v^+ \in \mathcal{R}(4,6;5,8;d)
	\quad\text{and}\quad
	G_v^- \in \mathcal{R}(5,10;4,4;\nu-1-d).
	\]	
	
	One idea is to find all \((4,6;5,8)\)-good graphs and all \((5,10;4,4)\)-good graphs, and then make a census of both classes.
	
	For computation, we save all graphs in graph6 format. For the census of \((4,6;5,8)\)-good graphs, we use the method of one-vertex extension. Indeed, if a graph \(G\) of order \(\nu+1\) is \((4,6;5,8;\nu+1)\)-good, then deleting any vertex \(v\) produces a graph \(G'\) that is \((4,6;5,8;\nu)\)-good. Starting from \(K_1\), we use one-vertex extension to obtain all \((4,6;5,8)\)-good graphs. 
	
	The census of \((5,10;4,4)\)-good graphs is carried out in the same way. These two censuses serve complementary roles in the decomposition of Lemma~\ref{lem:induced}: the \((4,6;5,8)\)-good graphs describe the possible neighborhoods \(G_v^+\), while the \((5,10;4,4)\)-good graphs describe the possible antineighborhoods \(G_v^-\). Their total counts are collected in Table~\ref{tab:census-total}.
	
	\begin{table}[h]
		\centering
		\begin{tabular}{|c||c|c||c|c|}
			\hline
			\(\nu\) & \(|\mathcal R(4,6;5,8;\nu)|\) & \(\varepsilon\) & \(|\mathcal R(5,10;4,4;\nu)|\) & \(\varepsilon\) \\
			\hline
			1  & 1     & [0,0]  & 1  & [0,0] \\
			2  & 2     & [0,1]  & 2  & [0,1] \\
			3  & 4     & [0,3]  & 4  & [0,3] \\
			4  & 10    & [0,5]  & 7  & [3,6] \\
			5  & 25    & [3,8]  & 12 & [5,9] \\
			6  & 83    & [5,12] & 21 & [9,13] \\
			7  & 311   & [7,16] & 31 & [13,18] \\
			8  & 1445  & [11,21]& 43 & [18,24] \\
			9  & 6867  & [15,27]& 40 & [24,30] \\
			10 & 24785 & [19,33]& 18 & [30,37] \\
			11 & 35191 & [25,40]& 2  & [44,45] \\
			12 & 6336  & [30,48]& 1  & [54,54] \\
			13 & 20    & [39,45]& 0  & -- \\
			\hline
		\end{tabular}
		\caption{A census of \(\mathcal R(4,6;5,8)\) and \(\mathcal R(5,10;4,4)\).}
		\label{tab:census-total}
	\end{table}
	
	\subsection{Gluing and the lucky-vertex inequality}
	
	From the computation, we find that the maximum order of a \((4,6;5,8)\)-good graph is \(13\), and the maximum order of a \((5,10;4,4)\)-good graph is \(12\).
	
	If \(G\in\mathcal{R}(5,10;5,8;20)\), then for every \(v\in V(G)\), we have
	\[
	\nu(G_v^+)\le 13,\qquad \nu(G_v^-)\le 12.
	\]
	Also,
	\[
	d_G(v)+d_{\overline{G}}(v)=19.
	\]
	Hence
	\[
	7\le d_G(v)\le 13,
	\]
	i.e., letting \(d=d_G(v)\), we have
	\[
	d\in\{7,8,9,10,11,12,13\}.
	\]
	For \(d\in\{7,8,9,13\}\), we glue
	\[
	G_v^+\in\mathcal{R}(4,6;5,8;d)
	\]
	and
	\[
	G_v^-\in\mathcal{R}(5,10;4,4;19-d).
	\]
	At this point, the only undetermined edges of \(G\) are those whose endpoints lie in \(G_v^+\) and \(G_v^-\), respectively. We introduce a Boolean variable for each such edge, where the value \(1\) indicates that the edge is present in \(G\) and \(0\) indicates that it is absent. We then encode the resulting constraints as a SAT instance and use a SAT solver to test whether a \((5,10;5,8;20)\)-good graph can be constructed. It is easy to see that we only need to consider those \(5\)-vertex subsets that may contain undetermined edges, and check whether a solution can make all such \(5\)-vertex subsets satisfy
	\[
	3\le \varepsilon \le 9.
	\]
	
	For \(d\in\{10,11,12\}\), we use an inequality as a necessary condition to pre-filter the candidate pairs before the SAT search.
	
	In \cite{MR1997}, McKay and Radziszowski gave the following identity: for any graph \(G\) of order \(\nu\),
	\begin{equation}\label{eq:MR-identity}
		\sum_{v\in V(G)} 2\varepsilon(G_v^-)
		=
		\sum_{v\in V(G)} \left(\nu(G_v^+)\bigl(\nu-2\nu(G_v^+)\bigr)
		+2\varepsilon(G_v^+)\right).
	\end{equation}
	Since the two sums in \eqref{eq:MR-identity} are equal, there exists at least one vertex \(v\in V(G)\) satisfying
	\begin{equation}\label{eq:lucky-ineq}
		2\varepsilon(G_v^-)\le \nu(G_v^+)\bigl(\nu-2\nu(G_v^+)\bigr)+2\varepsilon(G_v^+).
	\end{equation}
	We call such a vertex a \emph{lucky vertex}.
	
	As will be shown in Section~3.1, the gluing search for \(d\in\{7,8,9,13\}\), which does not use the lucky-vertex inequality \eqref{eq:lucky-ineq}, yields no solution. Hence if a \((5,10;5,8;20)\)-good graph \(G\) exists, then every vertex \(x\in V(G)\) has degree
	\[
	d(x)\in\{10,11,12\}.
	\]
	Since every counterexample must contain a lucky vertex, there is at least one vertex \(v\in V(G)\) with \(d\in\{10,11,12\}\) for which \eqref{eq:lucky-ineq} holds.
	
	Fix \(d\in\{10,11,12\}\), and consider a candidate pair
	\[
	A=G_v^+\in\mathcal{R}(4,6;5,8;d),\qquad
	B=G_v^-\in\mathcal{R}(5,10;4,4;19-d)
	\]
	for a lucky vertex \(v\). Writing \(\varepsilon(A)=\varepsilon(G_v^+)\) and \(\varepsilon(B)=\varepsilon(G_v^-)\), and using \(\nu=20\), the inequality \eqref{eq:lucky-ineq} becomes the explicit bound
	\[
	2\varepsilon(B)\le d\,(20-2d)+2\varepsilon(A),
	\]
	that is,
	\[
	\varepsilon(B)\le \varepsilon(A)+\tfrac{1}{2}d\,(20-2d).
	\]
	For \(d=10,11,12\), the constant \(\tfrac{1}{2}d(20-2d)\) equals \(0,-11,-24\), respectively. Any pair \((A,B)\) violating the corresponding bound cannot arise from a lucky vertex and is discarded before the SAT search. In this way the pre-filtering by the lucky-vertex inequality \eqref{eq:lucky-ineq} eliminates the vast majority of the candidate pairs; for instance, when \(d=10\) it retains only \(16.18\%\) of the \(24785\cdot 40=991400\) pairs.
	
	For the surviving pairs, the only undetermined edges of \(G\) are those with one endpoint in \(A\) and the other in \(B\). Encode these edges as Boolean variables and use a SAT solver to decide whether a \((5,10;5,8;20)\)-good graph can be constructed. It is easy to see that we only need to consider those \(5\)-vertex subsets that may contain undetermined edges, and check whether a solution can make all such \(5\)-vertex subsets satisfy
	\[
	3\le \varepsilon \le 9.
	\]
	In the SAT instance we additionally impose the degree constraints
	\[
	d(x)\in\{10,11,12\}\qquad\text{for every }x\in V(G)\setminus\{v\},
	\]
	which follow from the fact that \(d(x)\in\{10,11,12\}\) for every vertex of any counterexample. The search is implemented in PySAT \cite{pysat} using the CaDiCaL backend \cite{cadical}.
	
	\section{Computational Results}	
	
	\subsection{The cases \(d\in\{7,8,9,13\}\)}
	
	For these four values of \(d\), we ran the gluing search without the lucky-vertex inequality \eqref{eq:lucky-ineq}. All candidate pairs \((A,B)\) with
	\[
	A\in\mathcal{R}(4,6;5,8;d),\qquad
	B\in\mathcal{R}(5,10;4,4;19-d)
	\]
	were checked with the SAT solver. The numbers are collected in Table~\ref{tab:no-lucky}.
	
	\begin{table}[h]
		\centering
		\begin{tabular}{@{}ccc@{}}
			\toprule
			\(d\) & candidate pairs & outcome \\
			\midrule
			7  & 311          & all UNSAT \\
			8  & 2890         & all UNSAT \\
			9  & 123606       & all UNSAT \\
			13 & 420          & all UNSAT \\
			\bottomrule
		\end{tabular}
		\caption{The cases \(d\in\{7,8,9,13\}\): no \((5,10;5,8;20)\)-good graph.}
		\label{tab:no-lucky}
	\end{table}
	
	Consequently, no \((5,10;5,8;20)\)-good graph contains a vertex of degree \(d\in\{7,8,9,13\}\). Combined with the degree restriction already obtained in Section~2, this gives the refined degree restriction
	\[
	d(x)\in\{10,11,12\}\qquad\text{for every }x\in V(G).
	\]
	This is the restriction used in the next three subsections.
	
	\subsection{The case \(d=10\)}
	
	We now use the lucky-vertex inequality \eqref{eq:lucky-ineq} from Section~2. For \(d=10\), the constant \(\tfrac12 d(20-2d)\) vanishes, and the inequality \eqref{eq:lucky-ineq} reduces to
	\[
	\varepsilon(B)\le \varepsilon(A).
	\]
	Of the \(24785\cdot 40=991400\) pairs, \(160369\) survive this filter (\(16.18\%\)). For each surviving pair, we enforce
	\[
	3\le \varepsilon(G[S])\le 9
	\]
	for every \(5\)-vertex subset \(S\) that contains at least one vertex of \(A\) and at least one vertex of \(B\), together with the degree constraints \(d(x)\in\{10,11,12\}\) for \(x\neq v\).
	
	All \(160369\) instances returned UNSAT. Hence no \((5,10;5,8;20)\)-good graph has a lucky vertex of degree \(10\).
	
	\subsection{The case \(d=11\)}
	
	For \(d=11\), the constant \(\tfrac12 d(20-2d)\) equals \(-11\), and the inequality \eqref{eq:lucky-ineq} becomes
	\[
	\varepsilon(B)\le \varepsilon(A)-11.
	\]
	Of the \(35191\cdot 43=1513213\) pairs, \(395615\) survive (\(26.14\%\)). The surviving pairs were passed to the SAT solver. Out of \(395615\) instances, \(9\) returned SAT. Every SAT solution extends to a \((5,10;5,8;20)\)-good graph. Blocking clauses were then used to enumerate all solutions within each of the \(9\) SAT pairs; after canonicalization we obtained exactly \(7\) non-isomorphic \((5,10;5,8;20)\)-good graphs whose lucky vertex has degree \(11\).
	
	\begin{table}[htbp]
		\centering
		\begin{tabular}{|c|c||c|c|}
			\hline
			\(\varepsilon\) & \(|\mathcal R(4,6;5,8;11,\varepsilon)|\) & \(\varepsilon\) & \(|\mathcal R(5,10;4,4;8,\varepsilon)|\) \\
			\hline
			25 & 7    & 18 & 1  \\
			26 & 106  & 19 & 6  \\
			27 & 927  & 20 & 13 \\
			28 & 4006 & 21 & 13 \\
			29 & 8531 & 22 & 7  \\
			30 & 9846 & 23 & 2  \\
			31 & 6641 & 24 & 1  \\
			32 & 3092 &    &    \\
			33 & 1234 &    &    \\
			34 & 497  &    &    \\
			35 & 194  &    &    \\
			36 & 74   &    &    \\
			37 & 25   &    &    \\
			38 & 8    &    &    \\
			39 & 2    &    &    \\
			40 & 1    &    &    \\
			\hline
		\end{tabular}
		\caption{Edge-count distributions for \(d=11\): \(\mathcal R(4,6;5,8;11,\varepsilon)\) and \(\mathcal R(5,10;4,4;8,\varepsilon)\).}
		\label{tab:d11-edges}
	\end{table}
	
	\subsection{The case \(d=12\)}
	
	For \(d=12\), the constant \(\tfrac12 d(20-2d)\) equals \(-24\), and the inequality \eqref{eq:lucky-ineq} becomes
	\[
	\varepsilon(B)\le \varepsilon(A)-24.
	\]
	Of the \(6336\cdot 31=196416\) pairs, \(20192\) survive (\(10.28\%\)). Running SAT on the \(20192\) surviving pairs yielded \(13\) SAT instances. Enumerating all solutions within these \(13\) pairs and canonicalizing the resulting graphs gives exactly \(10\) non-isomorphic \((5,10;5,8;20)\)-good graphs whose lucky vertex has degree \(12\). Every one of these \(10\) graphs is \(12\)-regular, i.e.\ every vertex has degree \(12\).
	
	\subsection{The complete list for order \(20\)}
	
	Combining the two lists above, we obtain the following census.
	
	\begin{table}[h]
		\centering
		\begin{tabular}{@{}ccc@{}}
			\toprule
			source of the graph & number of isomorphism classes & degree sequence \\
			\midrule
			\(d=11\)    & \(7\)  & contains a vertex of degree \(11\) \\
			\(d=12\)    & \(10\) & \(12\)-regular \\
			\midrule
			total     & \(17\) & \\
			\bottomrule
		\end{tabular}
		\caption{All \((5,10;5,8;20)\)-good graphs up to isomorphism.}
		\label{tab:census-20}
	\end{table}
	
	No graph from the \(d=11\) list can be isomorphic to a graph from the \(d=12\) list, because the degree sequences differ. Hence the \(17\) isomorphism classes listed above are pairwise distinct.
	
	It also follows that
	\[
	R(5,10;5,8)\ge 21.
	\]
	
	\section{Excluding Order \(21\)}
	
	Suppose, for contradiction, that \(F\) is a \((5,10;5,8;21)\)-good graph. For any vertex \(v\in V(F)\), the induced subgraph \(F-v\) is a \((5,10;5,8;20)\)-good graph. Indeed, every \(5\)-vertex subset of \(F-v\) is also a \(5\)-vertex subset of \(F\), 
	so it induces at most \(9\) edges in \(F-v\) and at most \(7\) edges in the complement. By the census of Section~3, \(F-v\) must therefore be isomorphic to one of the \(17\) graphs in Table~\ref{tab:census-20}.
	
	Thus it suffices to check, for each of the \(17\) good graphs \(G\) on \(20\) vertices, whether there is a graph \(F\) on \(21\) vertices obtained by adding one vertex to \(G\) such that \(F\) is again \((5,10;5,8)\)-good. Write \(u\) for the new vertex. There are \(20\) Boolean variables \(x_w\) (\(w\in V(G)\)), where \(x_w=1\) means that \(uw\in E(F)\). The conditions on \(F\) reduce to:

	(1) For every \(4\)-vertex subset \(S\subseteq V(G)\),
	\[
	3\le \varepsilon(G[S])+\sum_{w\in S}x_w\le 9.
	\]
	(This is the condition that the \(5\)-vertex subset \(S\cup\{u\}\) of \(F\) has between \(3\) and \(9\) edges.)
	
	(2) Every \(5\)-vertex subset of \(V(G)\) has between \(3\) and \(9\) edges in \(G\); this holds by assumption on \(G\).

	No further constraint is needed, since any \(5\)-vertex subset of \(F\) either lies entirely in \(V(G)\) or consists of \(u\) together with a \(4\)-vertex subset of \(V(G)\).
	
	For each of the \(17\) graphs, we built the corresponding SAT instance and called CaDiCaL through PySAT. All \(17\) instances returned UNSAT. This establishes the following.
	
	\begin{proposition}\label{prop:no-21}
		There is no \((5,10;5,8)\)-good graph of order \(21\).
	\end{proposition}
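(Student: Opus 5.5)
The plan is a proof by contradiction via one-vertex extension, resting on the complete census of order \(20\) in Table~\ref{tab:census-20}. Suppose \(F\) is \((5,10;5,8;21)\)-good and fix any vertex \(u\in V(F)\). Goodness is hereditary: every \(5\)-subset of \(F-u\) is a \(5\)-subset of \(F\). Hence \(F-u\in\mathcal R(5,10;5,8;20)\), and by the census \(F-u\cong G\) for one of the \(17\) graphs \(G\) listed there. So it suffices to show that none of these \(17\) graphs admits a one-vertex extension that is still good.

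For a fixed \(G\), I will encode the neighbourhood of the new vertex \(u\) by Boolean variables \(x_w\), one for each \(w\in V(G)\). Every \(5\)-subset of \(F\) either avoids \(u\) or has the form \(S\cup\{u\}\) with \(|S|=4\). The first kind are already fine because \(G\) is good. For the second kind, goodness is exactly the cardinality constraint
\[
3-\varepsilon(G[S])\le \sum_{w\in S}x_w\le 9-\varepsilon(G[S]),
\]
for each of the \(\binom{20}{4}=4845\) sets \(S\). I will translate these into CNF clauses. Since \(|S|=4\), each constraint simply forbids certain assignments of \(4\) variables, so it can be written directly as at most \(16\) clauses without any auxiliary encoding. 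In practice only sets with \(\varepsilon(G[S])\ge 6\) (an upper bound that forces some \(x_w=0\)) or \(\varepsilon(G[S])\le 2\) (a lower bound that forces some \(x_w=1\)) contribute clauses.

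The resulting instance has only \(20\) variables, so it is tiny. I will run it through CaDiCaL for each of the \(17\) graphs. As an independent check, I will also brute-force all \(2^{20}\) neighbourhoods, which is easily feasible. I expect every instance to be UNSAT, which gives the proposition.

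The main obstacle is not this final step but its input: the argument is only as sound as the completeness of the \(17\)-graph census. That completeness depends on the correctness of the enumerations in Table~\ref{tab:census-total}, on the degree restriction derived from them, on the lucky-vertex filtering via \eqref{eq:lucky-ineq}, and on exhaustive enumeration with blocking clauses in the SAT cases. So I would take the census as given, as stated earlier, and put verification effort into the extension step through the two independent methods above.

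One could also cut the work by symmetry: pick \(u\) and note that \(d_F(u)\) must be consistent with every \(F-w\) having degrees in \(\{10,11,12\}\). This forces each degree of \(F\) into a narrow range and gives extra cardinality constraints on \(\sum_w x_w\) that might prune quickly. It is not needed, though, since the plain instance is already small.
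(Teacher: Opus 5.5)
Your proposal is correct and matches the paper's proof. You delete a vertex to land in the \(17\)-graph census of order \(20\), and then show that no one-vertex extension of any of these graphs is good. This is done via a \(20\)-variable SAT instance enforcing \(3\le \varepsilon(G[S])+\sum_{w\in S}x_w\le 9\) for every \(4\)-subset \(S\). The brute-force cross-check and the optional degree pruning, which would force \(\sum_w x_w\in\{11,12\}\), are valid extras that the paper does not use.
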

	
	\begin{proof}
		If such a graph \(F\) existed, then \(F-v\) for any \(v\in V(F)\) would be one of the \(17\) graphs in Table~\ref{tab:census-20}, and \(F\) would be a valid one-vertex extension of that graph. The SAT search above shows that no such extension exists. Contradiction.
	\end{proof}
	
	We then obtain the main result:
	\[
	R(5,10;5,8)=21.
	\]
	
	\section{A Census of Small Set Ramsey Numbers \(r_5(m,n)\)}
	
	The set Ramsey number \(r_5(m,n)\) is the smallest integer \(r\) such that for
	every graph \(G\) of order \(r\), either \(G\) contains a subgraph on \(5\)
	vertices with \(m\) edges, or \(\overline{G}\) contains a subgraph on \(5\)
	vertices with \(n\) edges \cite{Harborth1993}.
	Equivalently, in the notation of this paper,
	\[
	r_5(m,n) = R(5,m;5,n).
	\]
	In Table~\ref{tab:r5mn} we collect the current knowledge of \(r_5(m,n)\) for
	\(1\le m,n\le 10\). The baseline entries originate from the tabulation by 
	Harborth \cite{Harborth1993}. The entry \(r_5(10,8)=21\) is the main result of this paper,
	i.e., \(R(5,10;5,8)=21\). Using the same method, we also obtain the values 
	\(r_5(10,7)=17\) and \(r_5(9,8)=17\). For the specific cases \(m,n \in \{9,10\}\), 
	we incorporate the updates tracked in the dynamic survey of Radziszowski 
	\cite{radziszowski}, where the previously listed ranges \(r_5(10,9)=30\)--\(34\) 
	and \(r_5(10,10)=43\)--\(52\) have been refined to \(r_5(10,9)=30\) and 
	\(r_5(10,10)=43\)--\(46\). By symmetry, \(r_5(m,n)=r_5(n,m)\), so only the 
	upper triangle is shown.
	
	\begin{table}[htbp]
		\centering
		\small
		\setlength{\arrayrulewidth}{0.4pt}
		\renewcommand{\arraystretch}{1.4}
		\begin{tabular}{|c||*{10}{w{c}{1.45em}|}}
			\hline
			\(n \backslash m\) & 1 & 2 & 3 & 4 & 5 & 6 & 7 & 8 & 9 & 10 \\
			\hline\hline
			1  & 5 & 5 & 5 & 5 & 5 & 5 & 5 & 5 & 5 & 5 \\
			\hline
			2  &   & 5 & 5 & 5 & 5 & 5 & 5 & 5 & 5 & 6 \\
			\hline
			3  &   &   & 5 & 5 & 5 & 5 & 5 & 5 & 7 & 9 \\
			\hline
			4  &   &   &   & 5 & 5 & 5 & 5 & 6 & 7 & 9 \\
			\hline
			5  &   &   &   &   & 5 & 5 & 7 & 8 & 10 & 13 \\
			\hline
			6  &   &   &   &   &   & 6 & 7 & 9 & 11 & 14 \\
			\hline
			7  &   &   &   &   &   &   & 10 & 11 & 13 & 17 \\
			\hline
			8  &   &   &   &   &   &   &    & 14 & 17 & 21 \\
			\hline
			9  &   &   &   &   &   &   &    &    & 22 & 30 \\
			\hline
			10 &   &   &   &   &   &   &    &    &    & 43--46 \\
			\hline
		\end{tabular}
		\caption{The set Ramsey numbers \(r_5(m,n)\).}
		\label{tab:r5mn}
	\end{table}
	
	\section{Concluding Remarks}
	
	The proof of \(R(5,10;5,8)=21\) uses three ingredients:
	\begin{enumerate}
		\item a census of \((4,6;5,8)\)-good and \((5,10;4,4)\)-good graphs, obtained by one-vertex extension starting from \(K_1\);
		\item a gluing SAT search for \((5,10;5,8;20)\)-good graphs, using the lucky-vertex inequality as a pre-filter and degree constraints as an additional check;
		\item a one-vertex extension search from each of the \(17\) graphs in the order-\(20\) census, which excludes order~\(21\).
	\end{enumerate}
	
	The total wall-clock time of the computation was under two days on a standard laptop. All searches were implemented in SageMath \cite{sagemath}, using PySAT \cite{pysat} for the SAT instances and nauty \cite{nauty} for canonical labelling.
	
	It seems plausible that the same three-step approach extends to other small generalized threshold Ramsey numbers whose two auxiliary censuses terminate at a modest order. Larger parameters would require additional engineering, such as interval-based collapsing rules in the style of McKay--Radziszowski, to remain feasible.

	\section*{Acknowledgements}
	
	The authors thank Chuang Si from the School of Mathematical Sciences, Nanjing Normal University, for reproducing the computations, including the 17 counterexamples of order 20 and an independent verification of the upper bound by directly gluing to 21 vertices and finding no solution. The authors also thank DeepSeek for assisting in writing portions of the Python code used in the computations. The authors have independently verified all code and take full responsibility for its correctness.


\begin{thebibliography}{9}
		\bibitem{MR1995} B.~D. McKay and S.~P. Radziszowski.
		\(R(4,5)=25\).
		\emph{J. Graph Theory}, 19(3):309--322, 1995.
		
		\bibitem{Harborth1993} H.~Harborth.
		Ramsey numbers for sets of five vertex graphs with fixed numbers of edges.
		In N.~W. Sauer et al., editors, \emph{Finite and Infinite Combinatorics
			in Sets and Logic}, NATO ASI Series C, vol.~411, pp.~169--174.
		Kluwer Academic Publishers, Dordrecht, 1993.
		
		\bibitem{MR1997} B.~D. McKay and S.~P. Radziszowski.
		Subgraph counting identities and Ramsey numbers.
		\emph{J. Combin. Theory Ser. B}, 69(2):193--209, 1997.
		
		\bibitem{radziszowski} S.~P. Radziszowski. Small Ramsey numbers.
		\emph{Electron. J. Combin.}, 1:Dynamic Survey 1, 30, 1994
		(continuously updated).
		
		\bibitem{cadical} A.~Biere, K.~Fazekas, M.~Fleury, and M.~Heisinger.
		CaDiCaL, Kissat, Paracooba, Plingeling and Treengeling entering the SAT Competition 2020.
		In \emph{Proc. of SAT Competition 2020 -- Solver and Benchmark Descriptions},
		volume B-2020-1 of Department of Computer Science Report Series B,
		pp.~51--53, University of Helsinki, 2020.
		
		\bibitem{pysat} A.~Ignatiev, A.~Morgado, and J.~Marques-Silva.
		PySAT: A Python toolkit for prototyping with SAT oracles.
		In \emph{Theory and Applications of Satisfiability Testing -- SAT 2018},
		LNCS 10957, pp.~428--437, Springer, 2018.
		
		\bibitem{nauty} B.~D. McKay and A.~Piperno.
		Practical graph isomorphism, II.
		\emph{J. Symbolic Comput.}, 60:94--112, 2014.
		
		\bibitem{sagemath} W.~A. Stein et al.
		\emph{Sage Mathematics Software}.
		The Sage Development Team.
		\url{https://www.sagemath.org}.
	\end{thebibliography}
\end{document}